\documentclass[11pt,a4paper]{amsart}

\usepackage[utf8]{inputenc}
\usepackage[T1]{fontenc}
\usepackage{amsmath,amssymb,amsthm,amsfonts}
\usepackage{mathtools} 
\usepackage{tikz-cd}   
\usepackage[colorlinks=true,linkcolor=blue,citecolor=red]{hyperref}
\usepackage{geometry}
\newtheorem{theorem}{Theorem}[section]
\newtheorem{lemma}[theorem]{Lemma}

\theoremstyle{definition}
\newtheorem{definition}[theorem]{Definition}
\newtheorem{remark}[theorem]{Remark}

\numberwithin{equation}{section}

\newcommand{\Ind}{\mathrm{Ind}}
\newcommand{\Res}{\mathrm{Res}}
\newcommand{\bInd}{\mathbf{Ind}}
\newcommand{\bRes}{\mathbf{Res}}

\newcommand{\Ql}{\overline{\mathbb{Q}}_\ell}

\title{The Iterated Geometric Green's Formula}
\author{Chao Shen}
\thanks{This work was partially supported by the National Natural Science Foundation of China (No. 12471030)}
\address{Department of Mathematical Sciences, Tsinghua University, Beijing, China}
\email{sc20@mails.tsinghua.edu.cn}

\author[J. Xiao]{Jie Xiao}
\address{School of Mathematical Sciences, Beijing Normal University, Beijing, China}
\email{jxiao@bnu.edu.cn}
\date{\today}

\begin{document}

\begin{abstract}
	Fang, Lan, and Xiao established the geometric Green's formula as a categorical isomorphism for arbitrary semisimple complexes. In this short note, we generalize their work to multi-step compositions. Specifically, we establish the iterated geometric Green's formulas for the composition of an $(n-1)$-fold restriction and an induction, as well as its dual.
\end{abstract}

	\maketitle
	
\section{Introduction}

In the geometric categorification of quantum groups \cite{Lus90, Lus91, Lus93}, Lusztig introduced the induction and restriction functors on quiver representation spaces to categorify the multiplication and comultiplication of quantum groups. He proved a fundamental formula relating the composition of restriction and induction functors for a specific subcategory of perverse sheaves arising from flag varieties.

At the algebraic level, Green \cite{Green95} established a homological formula for filtration numbers, which equips the Hall algebra with a bialgebra structure. For complexes beyond Lusztig's specific subcategory, Xiao, Xu, and Zhao \cite{XXZ} verified the corresponding formula by utilizing trace maps and Green's algebraic formula under the sheaf-function correspondence. Subsequently, Fang, Lan, and Xiao \cite{FLX} categorified this relation, establishing the geometric Green's formula as an isomorphism for arbitrary semisimple complexes. In our attempt to formulate quantum cluster $\mathbf{i}$-characters \cite{BerensteinRupel2015} within Lusztig's geometric framework, an iterated version of Green's formula naturally emerges. We believe this iterated relation is of independent importance.

In this short note, we establish iterated geometric Green's formulas for the multi-step compositions of restriction and induction functors. For instance, the composition of an $(n-1)$-fold iterated restriction and an induction evaluated on semisimple complexes expands as:
	\[
	\bRes_{\gamma_1, \dots, \gamma_n} \circ \mathrm{Ind}_{\alpha, \beta} \cong \bigoplus_{\lambda} (\Psi_{\text{final}} \circ \cdots \circ \Psi_1) \{ -\Xi_{\lambda} \}
	\]
	where each operator $\Psi_j$ is constructed from single inductions and restrictions. A dual expansion holds for the composition $\mathrm{Res}_{\alpha, \beta} \circ \bInd_{\gamma_1, \dots, \gamma_n}$.
	
The paper is organized as follows. Section 2 recalls the constructions of the induction and restriction functors. Section 3 states the iterated geometric Green's formulas, and Section 4 provides their detailed proofs.
	
\section{Preliminaries}
Let $q$ be a prime power, $\mathbb{F}_q$ be a finite field, and $\overline{\mathbb{F}}_q$ be its algebraic closure. Throughout this paper, we work within the bounded equivariant derived category of constructible sheaves $\mathcal{D}^b_{G}(-, \overline{\mathbb{Q}}_\ell)$, and we restrict our attention to the full subcategory of semisimple complexes.

\subsection{Representation Spaces of Quivers}
Let $Q = (Q_0, Q_1)$ be a finite quiver, where $Q_0$ is the set of vertices and $Q_1$ is the set of arrows.

\begin{definition}[Euler Form]\label{def: euler-form}
	For any dimension vectors $\alpha, \beta \in \mathbb{N}^{Q_0}$, the \textbf{Euler form} is defined as the bilinear form:
	\begin{equation}
		\langle \alpha, \beta \rangle = \sum_{i \in Q_0} \alpha_i \beta_i - \sum_{h \in Q_1} \alpha_{s(h)} \beta_{t(h)}.
	\end{equation}
	The \textbf{symmetrized Euler form} is given by $(\alpha, \beta) = \langle \alpha, \beta \rangle + \langle \beta, \alpha \rangle$.
\end{definition}

\begin{definition}[Representation Space]\label{def: rep-space}
	For a dimension vector $\alpha \in \mathbb{N}^{Q_0}$, the \textbf{representation space} is defined as the affine variety:
	\begin{equation}
		E_{\alpha} = \bigoplus_{h\in Q_1} \mathrm{Hom}(\overline{\mathbb{F}}_q^{\alpha_{s(h)}}, \overline{\mathbb{F}}_q^{\alpha_{t(h)}}).
	\end{equation}
	The \textbf{gauge group} $G_{\alpha} = \prod_{i\in Q_0} \mathrm{GL}_{\alpha_i}(\overline{\mathbb{F}}_q)$ acts on $E_{\alpha}$ naturally by conjugation:
	\begin{equation}
		(g \cdot x)_h = g_{t(h)} x_h g_{s(h)}^{-1}
	\end{equation}
	for $g\in G_{\alpha}$, $x\in E_{\alpha}$, and $h\in Q_1$.
\end{definition}

\subsection{Induction and Restriction Functors}

In Lusztig's geometric categorification of quantum groups \cite{Lus90,Lus91,Lus93}, the induction and restriction functors correspond to the algebraic multiplication and comultiplication, respectively. We now briefly recall their definitions.

\textbf{Induction.} Let $\alpha, \beta \in \mathbb{N}^{Q_0}$ be dimension vectors. We define the variety $E_{\alpha,\beta}$ as the space of pairs $(x, W)$, where $x \in E_{\alpha+\beta}$ and $W \subset \overline{\mathbb{F}}_q^{\alpha+\beta}$ is an $x$-stable $Q_0$-graded subspace of dimension vector $\beta$. Let $E^{(1)}_{\alpha, \beta}$ be the variety of quadruples $(x, W, \rho_{\alpha}, \rho_{\beta})$, where $(x, W) \in E_{\alpha, \beta}$ and $\rho_{\alpha}: \overline{\mathbb{F}}_q^{\alpha+\beta}/ W \xrightarrow{\sim} \overline{\mathbb{F}}_q^{\alpha}$ and $\rho_{\beta}: W \xrightarrow{\sim} \overline{\mathbb{F}}_q^{\beta}$ are linear isomorphisms.

Consider the following induction diagram:
\begin{equation}
	\begin{tikzpicture}[baseline=(current bounding box.center), >=stealth, scale=1.2]
		\node (L) at (0, 0) {$E_\alpha \times E_\beta$};
		\node (M1) at (3, 0) {$E^{(1)}_{\alpha, \beta}$};
		\node (M2) at (6, 0) {$E_{\alpha,\beta}$};
		\node (R) at (9, 0) {$E_{\alpha+\beta}$};
		
		\draw[->] (M1) -- node[above] {$p_{\alpha,\beta}$} (L);
		\draw[->] (M1) -- node[above] {$r_{\alpha,\beta}$} (M2);
		\draw[->] (M2) -- node[above] {$q_{\alpha,\beta}$} (R);
	\end{tikzpicture}
\end{equation}
Here, $p_{\alpha,\beta}(x, W, \rho_{\alpha}, \rho_{\beta}) = (\rho_{\alpha,*}(x|_{\mathbb{F}^{\alpha+\beta}_q/ W}), \rho_{\beta,*}(x|_W))$ is a smooth morphism with connected fibers. The maps $r_{\alpha,\beta}$ and $q_{\alpha,\beta}$ are natural projections, where $r_{\alpha,\beta}$ is a principal $G_\alpha \times G_\beta$-bundle, and $q_{\alpha,\beta}$ is proper.

\begin{definition}[Induction]\label{def: ind}
	For $\mathcal{F} \in \mathcal{D}_{G_\alpha \times G_\beta}^b (E_\alpha \times E_\beta, \overline{\mathbb{Q}}_\ell)$, the \textbf{induction functor} is defined as:
	\begin{equation}
		\mathrm{Ind}_{\alpha,\beta}(\mathcal{F}) = (q_{\alpha,\beta})_! (r_{\alpha,\beta})_\flat (p_{\alpha,\beta})^*(\mathcal{F}) \{ \langle\langle \alpha, \beta \rangle\rangle \},
	\end{equation}
where $(r_{\alpha,\beta})_\flat$ is the quasi-inverse of the pullback $(r_{\alpha,\beta})^*$, and $\langle\langle \alpha, \beta \rangle\rangle = \sum_{i \in Q_0} \alpha_i \beta_i + \sum_{h \in Q_1} \alpha_{s(h)} \beta_{t(h)}$. Here, the notation $\{ d \} = [d](d/2)$ denotes the shift and Tate twist.
\end{definition}

\textbf{Restriction.} Fix a $Q_0$-graded subspace $W_0 \subset \overline{\mathbb{F}}_q^{\alpha+\beta}$ of dimension vector $\beta$, with chosen isomorphisms $\rho_{\alpha,0}: \overline{\mathbb{F}}_q^{\alpha+\beta} / W_0 \xrightarrow{\sim} \overline{\mathbb{F}}_q^\alpha$ and $\rho_{\beta,0}: W_0 \xrightarrow{\sim} \overline{\mathbb{F}}_q^\beta$. Let $F_{\alpha,\beta} \subset E_{\alpha+\beta}$ be the closed subvariety of representations stabilizing $W_0$, and let $P_{\alpha,\beta} \subset G_{\alpha+\beta}$ be the corresponding parabolic subgroup.

Consider the restriction diagram:
\begin{equation}
	\begin{tikzpicture}[baseline=(current bounding box.center), >=stealth, scale=1.2]
		\node (L) at (0, 0) {$E_\alpha \times E_\beta$};
		\node (M) at (3, 0) {$F_{\alpha,\beta}$};
		\node (R) at (6, 0) {$E_{\alpha+\beta}$};
		
		\draw[->] (M) -- node[above] {$\kappa_{\alpha,\beta}$} (L);
		\draw[->] (M) -- node[above] {$\iota_{\alpha,\beta}$} (R);
	\end{tikzpicture}
\end{equation}
where $\iota_{\alpha,\beta}$ is the natural embedding, and $\kappa_{\alpha,\beta}(x) = ((\rho_{\alpha,0})_*(x|_{\overline{\mathbb{F}}_q^{\alpha+\beta}/ W_0}), (\rho_{\beta,0})_*(x|_{W_0}))$ is a vector bundle of rank $\sum_{h\in Q_1}\alpha_{s(h)}\beta_{t(h)}$, equivariant under the parabolic action.

\begin{definition}[Restriction]\label{def: res}
	For $\mathcal{L} \in \mathcal{D}_{G_{\alpha+\beta}}^b(E_{\alpha+\beta}, \overline{\mathbb{Q}}_\ell)$, the \textbf{restriction functor} is defined as:
	\begin{equation}
		\mathrm{Res}_{\alpha,\beta}(\mathcal{L}) = (\kappa_{\alpha,\beta})_! (\iota_{\alpha,\beta})^* (\mathcal{L}) \{ -\langle \alpha, \beta \rangle \}.
	\end{equation}
\end{definition}

\begin{lemma}[\cite{BBD, Braden03}; see also \cite{Lus93}]\label{lem: semisimplicity}
	Both the induction functor $\mathrm{Ind}_{\alpha,\beta}$ and the restriction functor $\mathrm{Res}_{\alpha,\beta}$ preserve the subcategory of semisimple complexes.
\end{lemma}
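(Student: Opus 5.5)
The plan is to reduce both claims to the decomposition theorem of Beilinson--Bernstein--Deligne--Gabber, applied directly for $\mathrm{Ind}_{\alpha,\beta}$, and to Braden's hyperbolic localization theorem for $\mathrm{Res}_{\alpha,\beta}$. Throughout, ``semisimple'' means a finite direct sum of shifted (and Tate-twisted) simple equivariant perverse sheaves, pure of some weight when one works over $\mathbb{F}_q$. I will therefore track purity alongside the perverse decomposition.

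\emph{Induction.} I analyse the three steps of $\mathrm{Ind}_{\alpha,\beta}$ separately.

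First, $p_{\alpha,\beta}$ is smooth with connected fibers, of relative dimension $d$. So $p_{\alpha,\beta}^*[d]$ is t-exact, sends simple perverse sheaves to simple perverse sheaves (intermediate extensions commute with smooth pullback), and preserves purity up to the twist.

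Second, $r_{\alpha,\beta}$ is a principal $G_\alpha\times G_\beta$-bundle. Hence $(r_{\alpha,\beta})_\flat$, the quasi-inverse of $r^*_{\alpha,\beta}$ on equivariant categories, is an equivalence of the relevant perverse hearts up to shift, and it preserves simples and weights.

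Third, $q_{\alpha,\beta}$ is proper; it is a Grassmannian-type bundle over $E_{\alpha+\beta}$. For a pure semisimple complex $\mathcal{G}$ on $E_{\alpha,\beta}$, the decomposition theorem of \cite{BBD} shows that $(q_{\alpha,\beta})_!\mathcal{G}=(q_{\alpha,\beta})_*\mathcal{G}$ is pure and geometrically semisimple. In the equivariant setting I will apply this on finite-dimensional approximations of the Borel construction. Since the shift and twist $\{\langle\langle\alpha,\beta\rangle\rangle\}$ are harmless, this proves the statement for induction.

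\emph{Restriction.} Here $(\kappa_{\alpha,\beta})_!\iota_{\alpha,\beta}^*$ is not a proper pushforward, so BBD does not apply directly. The key step is to identify this functor with a hyperbolic localization.

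Choose a cocharacter $\lambda:\mathbb{G}_m\to G_{\alpha+\beta}$ acting by weight $1$ on $W_0$ and weight $0$ on a complement $\cong \overline{\mathbb{F}}_q^\alpha$. Then:
\begin{itemize}
\item the $\lambda$-fixed locus of $E_{\alpha+\beta}$ is $E_\alpha\times E_\beta$;
\item the attracting set, i.e.\ the points where $\lim_{t\to0}\lambda(t)x$ exists, is exactly $F_{\alpha,\beta}$;
\item the limit map is $\kappa_{\alpha,\beta}$.
\end{itemize}
Thus $(\kappa_{\alpha,\beta})_!\iota_{\alpha,\beta}^*$ is one of Braden's hyperbolic localization functors, applied to a $G_{\alpha+\beta}$-equivariant, hence $\mathbb{G}_m$-equivariant, complex. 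Braden's theorem \cite{Braden03} says that this functor agrees with the opposite localization $(\kappa^-)_*(\iota^-)^!$. One composite is built from $!$-operations and the other from $*$-operations, so the functor preserves both weight $\le w$ and weight $\ge w$. It therefore sends pure complexes to pure complexes, and by BBD pure complexes are geometrically semisimple.

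Two points remain for restriction. The output must be equivariant for the Levi $G_\alpha\times G_\beta$, which follows because the construction is $P_{\alpha,\beta}$-equivariant and the unipotent radical acts trivially on $E_\alpha\times E_\beta$. The shift $\{-\langle\alpha,\beta\rangle\}$ again does not affect semisimplicity.

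The main obstacle is this identification with hyperbolic localization together with Braden's weight-preservation result, which needs the $\mathbb{G}_m$-equivariance hypothesis and a check that $\kappa_{\alpha,\beta}$ is exactly the contraction map. Over $\overline{\mathbb{F}}_q$ without weights, one would instead argue via Lusztig's original approach \cite{Lus93} on his subcategory; for arbitrary semisimple complexes the weight argument through mixed structures is the route I would take.
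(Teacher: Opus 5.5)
Your proposal is correct and takes the route the paper points to; the paper gives no proof, only the citations \cite{BBD, Braden03}: the decomposition theorem for the proper map $q_{\alpha,\beta}$, and Braden's hyperbolic localization for $\mathrm{Res}_{\alpha,\beta}$, where your check that $F_{\alpha,\beta}$ is the attracting set and $\kappa_{\alpha,\beta}$ the limit map is right. Two harmless slips: $q_{\alpha,\beta}$ is not a Grassmannian bundle over $E_{\alpha+\beta}$ (its fibers are quiver Grassmannians varying with $x$), only the restriction of the proper projection $E_{\alpha+\beta}\times\mathrm{Gr}\to E_{\alpha+\beta}$ to a closed subvariety; and the weight argument is really that $\kappa_!\iota^*$ uses $!$-pushforward and $*$-pullback (weights $\le w$) while its Braden dual uses $*$-pushforward and $!$-pullback (weights $\ge w$).
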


\subsection{Geometric Green's Formula}

Let $Q = (Q_0, Q_1)$ be a quiver. For a sequence of dimension vectors $\underline{\alpha} = (\alpha_1, \dots, \alpha_m)$ with $\sum_{j=1}^m \alpha_j = \alpha$, let $E_{\alpha_1, \dots, \alpha_m}$ be the variety of $\underline{\alpha}$-filtered $Q$-representations. The variety of filtrations $\tilde{{F}}_{\underline{\alpha}}$ is defined as the variety of pairs $(x, \mathbf{V})$ where $x \in E_\alpha$ and $\mathbf{V} = (0=V^0 \subset \dots \subset V^m = V_\alpha)$ is a filtration of subrepresentations satisfying $\underline{\dim}(V^j/V^{j-1}) = \alpha_j$. Let $\pi_{\underline{\alpha}}: \tilde{{F}}_{\underline{\alpha}} \to E_\alpha$ be the proper projection mapping $(x, \mathbf{V}) \mapsto x$.

\begin{definition}[Lusztig Sheaf]
	The \textbf{Lusztig sheaf} associated with $\underline{\alpha}$ is defined as the complex:
	$$\mathcal{L}_{\underline{\alpha}} \coloneqq (\pi_{\underline{\alpha}})_! \underline{\overline{\mathbb{Q}}_\ell}_{\tilde{{F}}_{\underline{\alpha}}} \{d_{\underline{\alpha}}\}$$
	where $d_{\underline{\alpha}} = \dim E_{\alpha_1, \dots, \alpha_m} + \sum_{i=1}^m \dim E_{\alpha_i}$. Equivalently, it can be expressed as the $m$-fold iterated induction product $\mathbf{1}_{\alpha_1} \star \dots \star \mathbf{1}_{\alpha_m}$ with $\mathbf{1}_{\alpha_i} \coloneqq \underline{\overline{\mathbb{Q}}_\ell}_{E_{\alpha_i}}\{\dim E_{\alpha_i}\}$.
\end{definition}

\begin{remark}
	Since $\pi_{\underline{\alpha}}$ is proper, $\mathcal{L}_{\underline{\alpha}}$ is a semisimple complex. In the case where each $\alpha_k$ is a multiple of a simple dimension vector, $\mathcal{L}_{\underline{\alpha}}$ matches the monomial product $u_{\alpha_1} \dots u_{\alpha_m}$ in the composition algebra of the Hall algebra under the sheaf-function correspondence. 
\end{remark}

\begin{lemma}[Restriction Formula \cite{Lus90,Lus91,Lus93, Schiffmann12}]\label{lem: Res-formula}
	Let $\underline{\alpha} = (\alpha_1, \dots, \alpha_m)$ be a sequence where each $\alpha_k$ is a multiple of a simple dimension vector $\epsilon_{i_k}$ (corresponding to a vertex $i_k \in Q_0$). The restriction of $\mathcal{L}_{\underline{\alpha}}$ admits a decomposition:

	\begin{equation}
		\mathrm{Res}_{\beta, \gamma}^\alpha(\mathcal{L}_{\underline{\alpha}}) = \bigoplus_{(\underline{\beta}, \underline{\gamma})} \mathcal{L}_{\underline{\beta}} \boxtimes \mathcal{L}_{\underline{\gamma}} \{\Delta_{\underline{\beta}, \underline{\gamma}} \},
	\end{equation}
	where the direct sum runs over all sequences $\underline{\beta}, \underline{\gamma} \in (\mathbb{N}^{Q_0})^m$ satisfying:
		\begin{equation}
			\sum_{k=1}^m \beta_k = \beta, \quad \sum_{k=1}^m \gamma_k = \gamma, \quad \text{and } \beta_k + \gamma_{k} = \alpha_k \text{ for all } k.
		\end{equation}
	The normalized shift $\Delta_{\underline{\beta}, \underline{\gamma}}$ is given by:
	\begin{equation}
		\Delta_{\underline{\beta}, \underline{\gamma}} = \dim E_{\underline{\alpha}} - \dim E_{\underline{\beta}} - \dim E_{\underline{\gamma}} - \langle \beta, \gamma \rangle - 2\operatorname{rank} \kappa^{\underline{\beta}, \underline{\gamma}},
	\end{equation}
	with
\begin{equation}
	\operatorname{rank} \kappa^{\underline{\beta}, \underline{\gamma}}= 
	\sum_{i\in Q_0} \sum_{k<l} (\beta_l)_i(\gamma_k)_i +
	\sum_{h\in Q_1} \sum_{k<l} (\beta_k)_{s(h)} (\gamma_l)_{t(h)}.
\end{equation}
\end{lemma}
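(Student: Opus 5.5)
We prove Lemma~\ref{lem: Res-formula} by base change on the restriction diagram, stratifying by the relative position of the fixed subspace $W_0$ and the flag $\mathbf{V}$. By definition,
\[
\mathrm{Res}_{\beta,\gamma}(\mathcal{L}_{\underline{\alpha}}) = (\kappa_{\beta,\gamma})_!(\iota_{\beta,\gamma})^*(\pi_{\underline{\alpha}})_!\,\underline{\Ql}\{d_{\underline{\alpha}}-\langle\beta,\gamma\rangle\}.
\]
Proper base change along $\iota_{\beta,\gamma}$ replaces $(\iota_{\beta,\gamma})^*(\pi_{\underline{\alpha}})_!$ by the pushforward from the variety $Y=\{(x,\mathbf{V}) : x\in F_{\beta,\gamma},\ \mathbf{V}\ x\text{-stable of type }\underline{\alpha}\}$. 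So we must compute $(\kappa\circ\pi)_!\underline{\Ql}_Y$.

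\textbf{Stratification.} We stratify $Y$ by the dimension vectors $\gamma_k=\underline{\dim}\,(V^k\cap W_0)/(V^{k-1}\cap W_0)$ and $\beta_k=\alpha_k-\gamma_k$. This gives locally closed pieces $Y_{\underline{\beta},\underline{\gamma}}$ indexed exactly by the pairs in the statement. Each $\alpha_k$ is a multiple of a simple $\epsilon_{i_k}$, so every $\beta_k,\gamma_k$ is a multiple of $\epsilon_{i_k}$ as well. On $Y_{\underline{\beta},\underline{\gamma}}$ there is a map to $\tilde F_{\underline{\beta}}\times\tilde F_{\underline{\gamma}}$, sending $(x,\mathbf{V})$ to the induced flags $\mathbf{V}\cap W_0$ on $W_0$ and the image of $\mathbf{V}$ in $V/W_0$, together with the two restricted representations. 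This map covers $\kappa$.

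The key geometric claim is that this map is a vector bundle of rank $\operatorname{rank}\kappa^{\underline{\beta},\underline{\gamma}}$. The fiber has two parts:
\begin{itemize}
\item the choice of flag $\mathbf{V}$ lifting a given pair of sub- and quotient flags, which is an affine space with $\sum_{k<l}(\beta_l)_i(\gamma_k)_i$ coordinates at each vertex $i$;
\item the off-diagonal part of $x$ from the quotient to $W_0$ compatible with $\mathbf{V}$, which is the count over arrows $h$ with $k<l$.
\end{itemize}
The simplicity hypothesis is what makes the compatibility constraints linear and of constant rank: each arrow $h$ with $\beta_k,\gamma_l$ supported at $s(h)$ and $t(h)$ contributes freely, with no dependence on $x$. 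Hence $(\kappa\pi)_!\underline{\Ql}_{Y_{\underline{\beta},\underline{\gamma}}}\cong \mathcal{L}_{\underline{\beta}}\boxtimes\mathcal{L}_{\underline{\gamma}}$ up to the shift $\{-2\operatorname{rank}\kappa^{\underline{\beta},\underline{\gamma}}\}$ coming from compactly supported cohomology of the affine fibers, and the normalizations $d_{\underline{\beta}}$ and $d_{\underline{\gamma}}$. Collecting the constants gives precisely $\Delta_{\underline{\beta},\underline{\gamma}}$, with $\dim E_{\underline{\alpha}}-\dim E_{\underline{\beta}}-\dim E_{\underline{\gamma}}$ accounting for the difference of the $d$'s.

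\textbf{Splitting the filtration.} The stratification only yields a filtration of $\mathrm{Res}(\mathcal{L}_{\underline{\alpha}})$ whose graded pieces are the terms above. To split it we use purity, which is the main obstacle. Each graded piece is a pure complex, being a shifted Tate twist of a proper pushforward from a smooth variety. By Lemma~\ref{lem: semisimplicity} the total complex is semisimple. Working over $\mathbb{F}_q$ with Frobenius weights, the triangles coming from the open/closed decomposition of $Y$ split, by the decomposition theorem of BBD together with the vanishing of $\mathrm{Hom}$ of positive degree between pure complexes of matching weight.

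As an alternative, one may argue by induction on $m$. This uses $\mathcal{L}_{\underline{\alpha}}=\mathbf{1}_{\alpha_1}\star\mathcal{L}_{(\alpha_2,\dots)}$ together with the known Green-type formula for one step. The base case is $\mathrm{Res}(\mathbf{1}_{n\epsilon_i})=\mathbf{1}\boxtimes\mathbf{1}$, which is immediate. Either way, the delicate bookkeeping is checking that the twist matches $\Delta$.
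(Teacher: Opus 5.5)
\textbf{Verdict.} The paper does not prove this lemma; it quotes it from Lusztig and Schiffmann. Your argument is the standard proof in those sources: base change along $\iota_{\beta,\gamma}$, stratification of $Y$ by relative position to $W_0$, affine fibrations $Y_{\underline\beta,\underline\gamma}\to\tilde{F}_{\underline\beta}\times\tilde{F}_{\underline\gamma}$, and splitting by weights. So the strategy is right, but the bookkeeping fails as written.

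\textbf{The counts are reversed.} Use your own conventions, $\underline{\dim}\,V^k/V^{k-1}=\alpha_k$ and $\gamma_k=\underline{\dim}\,(V^k\cap W_0)/(V^{k-1}\cap W_0)$. Then the lifts of a given sub- and quotient flag form an affine space of dimension $\sum_i\sum_{k<l}(\beta_k)_i(\gamma_l)_i$, not $\sum_i\sum_{k<l}(\beta_l)_i(\gamma_k)_i$.
\begin{itemize}
\item Example: at one vertex take $\alpha=2\epsilon_i$, $\beta=\gamma=\epsilon_i$, $\underline\alpha=(\epsilon_i,\epsilon_i)$. The stratum $V^1=W_0$ has a unique lift, and the stratum $V^1\neq W_0$ has an $\mathbb{A}^1$ of lifts. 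Your count predicts the reverse.
\item Likewise the off-diagonal block must send $V^k\cap C$ into $V^k\cap W_0$. This gives $\sum_h\sum_{l\le k}(\beta_k)_{s(h)}(\gamma_l)_{t(h)}$, not the $k<l$ count you state.
\end{itemize}
The stated $\operatorname{rank}\kappa^{\underline\beta,\underline\gamma}$ appears only after reversing the order of the flag. That is the convention $\mathcal{L}_{\underline\alpha}=\mathbf{1}_{\alpha_1}\star\cdots\star\mathbf{1}_{\alpha_m}$ with the paper's $\Ind_{\alpha,\beta}$ (subrepresentation of dimension $\beta$), where $\alpha_m$ sits at the bottom. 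The paper's definition of $\tilde{F}_{\underline\alpha}$ uses the opposite order. You must fix one convention; with yours, the constants do not collect to $\Delta$.

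\textbf{The constant-rank claim needs a different justification.} The simple-vertex hypothesis is not what makes the fibration have constant rank. The real argument works for any $\underline\alpha$:
\begin{itemize}
\item Choose a graded complement $C$ of $W_0$ with $V^k=(V^k\cap W_0)\oplus(V^k\cap C)$ for all $k$. This is what makes $P_{\beta,\gamma}$ transitive on flags in a given relative position.
\item Then $x$ preserves $W_0$ and $\mathbf{V}$ exactly when its diagonal blocks preserve the induced flags and its off-diagonal block maps $V^k\cap C$ into $V^k\cap W_0$.
\item That is a linear condition depending only on $\mathbf{V}$, not on $x$, and its rank depends only on $(\underline\beta,\underline\gamma)$.
\end{itemize}
The hypothesis is only needed, for a loop-free quiver, to kill the diagonal terms $k=l$ and simplify the normalizations, so that $\Delta$ takes its stated shape.

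\textbf{Smaller corrections.}
\begin{itemize}
\item In the splitting step, $\mathrm{Hom}(K'',K'[1])$ between pure weight-zero pieces need not vanish. What you actually use is that over $\overline{\mathbb{F}}_q$ this space has weights $\ge 1$. The connecting morphism is defined over $\mathbb{F}_q$, hence Frobenius-invariant, hence zero over $\overline{\mathbb{F}}_q$. The semisimplicity of the total complex plays no role.
\item In your alternative route, for a loop-free vertex and $n=k+l$, one has $\mathrm{Res}_{k\epsilon_i,l\epsilon_i}(\mathbf{1}_{n\epsilon_i})=\mathbf{1}_{k\epsilon_i}\boxtimes\mathbf{1}_{l\epsilon_i}\{-kl\}$, not an unshifted $\mathbf{1}\boxtimes\mathbf{1}$.
\end{itemize}
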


\begin{theorem}[Geometric Green's Formula \cite{FLX}]\label{thm: Lusztig-Green}
	 Let $\alpha, \beta, \alpha', \beta' \in \mathbb{N}^{Q_0}$ satisfy $\alpha + \beta = \alpha' + \beta' = \gamma$. For any semisimple complexes $\mathcal{F} \in \mathcal{D}^b_{G_\alpha}(E_\alpha)$ and $\mathcal{G} \in \mathcal{D}^b_{G_\beta}(E_\beta)$, there is an isomorphism:
	 \begin{equation}
	 	\begin{aligned}
	 		\mathrm{Res}_{\alpha', \beta'}^{\gamma} \left( \mathrm{Ind}_{\alpha,\beta}^{\gamma} (\mathcal{F} \boxtimes \mathcal{G}) \right) \simeq &
	 		\bigoplus_{\lambda \in \mathcal{N}} 
	 		\left( \mathrm{Ind}_{\alpha_a, \beta_a}^{\alpha'} \times \mathrm{Ind}_{\alpha_b, \beta_b}^{\beta'} \right) \circ \tau_{\lambda!} \\
	 		& \circ \left( \mathrm{Res}_{\alpha_a, \alpha_b}^{\alpha} \mathcal{F} \boxtimes \mathrm{Res}_{\beta_a, \beta_b}^{\beta} \mathcal{G} \right) 
	 		\{ - ( \alpha_b, \beta_a ) \},
	 	\end{aligned}
	 \end{equation}
	 	where the direct sum runs over the set $\mathcal{N}$ of all quadruples $\lambda = (\alpha_a, \alpha_b, \beta_a, \beta_b) \in (\mathbb{N}^{Q_0})^4$ satisfying:
	 	\begin{equation}
	 		\alpha = \alpha_a + \alpha_b, \quad \beta = \beta_a + \beta_b, \quad \alpha' = \alpha_a + \beta_a, \quad \beta' = \alpha_b + \beta_b.
	 	\end{equation}
     $\tau_{\lambda}: E_{\alpha_a} \times E_{\alpha_b} \times E_{\beta_a} \times E_{\beta_b} \xrightarrow{\simeq} E_{\alpha_a} \times E_{\beta_a} \times E_{\alpha_b} \times E_{\beta_b}$ is the coordinate exchange isomorphism swapping the second and third factors, and $\{ - ( \alpha_b, \beta_a ) \}$ is the normalized shift given by the negative symmetrized Euler form.
\end{theorem}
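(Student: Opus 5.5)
The plan is to follow the geometric strategy of \cite{FLX}: compute both sides by proper and smooth base change, then stratify the resulting fibre product and show that it splits. First I write $\mathrm{Res}_{\alpha',\beta'}\circ\mathrm{Ind}_{\alpha,\beta}(\mathcal{F}\boxtimes\mathcal{G})$ explicitly. Proper base change along $\iota_{\alpha',\beta'}$ applied to $q_{\alpha,\beta}$ replaces $E_{\alpha,\beta}$ by the variety $Z$ of triples $(x,W)$ with $x\in F_{\alpha',\beta'}$ and $W$ an $x$-stable subspace of dimension $\beta$. The pullback of $r_{\alpha,\beta}$ and $p_{\alpha,\beta}$ gives the corresponding framed variety $Z^{(1)}$. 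The left-hand side is then $(\kappa_{\alpha',\beta'})_!\,(q|_Z)_!\,(r|_Z)_\flat\,(p|_{Z^{(1)}})^*(\mathcal{F}\boxtimes\mathcal{G})$ with the combined shift $\langle\langle\alpha,\beta\rangle\rangle-\langle\alpha',\beta'\rangle$.

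Second, I stratify $Z$ by the relative position of $W$ and the fixed subspace $W_0$. To each point I attach $\beta_b=\underline{\dim}(W\cap W_0)$, $\beta_a=\beta-\beta_b$, $\alpha_b=\underline{\dim}\,W_0-\beta_b$ and $\alpha_a=\alpha-\alpha_b$. These quadruples are exactly the elements $\lambda\in\mathcal{N}$, and they give a finite partition $Z=\bigsqcup_\lambda Z_\lambda$ into locally closed $P_{\alpha',\beta'}$-stable pieces. On each $Z_\lambda$ the data $(x,W)$ induces four subquotient representations of dimensions $\alpha_a,\alpha_b,\beta_a,\beta_b$, together with a flag on $\overline{\mathbb{F}}_q^{\alpha'}$ (namely the image of $W$) and a flag on $W_0$ (namely $W\cap W_0$). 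I then identify the contribution of $Z_\lambda$ with the composite $(\mathrm{Ind}\times\mathrm{Ind})\circ\tau_{\lambda!}\circ(\mathrm{Res}\boxtimes\mathrm{Res})$. Concretely, I build a diagram in which $Z_\lambda$ maps to the product of the restriction varieties $F_{\alpha_a,\alpha_b}\times F_{\beta_a,\beta_b}$ and to the induction varieties for $(\alpha_a,\beta_a)$ and $(\alpha_b,\beta_b)$. The key point is that the comparison map is an affine bundle, fibred by the extension data of $W_0$-blocks between the $a$ and $b$ parts. Its rank, compared with the ranks of $\kappa$ and the dimensions of the fibres of $p$ and $r$, is what produces the discrepancy $-(\alpha_b,\beta_a)=-\langle\alpha_b,\beta_a\rangle-\langle\beta_a,\alpha_b\rangle$. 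This is a bookkeeping computation with the formulas for $\langle\langle\cdot,\cdot\rangle\rangle$ and the ranks of $\kappa$.

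The main obstacle is the final step, passing from the stratification to a direct sum decomposition. The open-closed triangles for $Z=\bigsqcup Z_\lambda$ only give a filtration of the left-hand side whose graded pieces are the terms above. To split it, I use Lemma \ref{lem: semisimplicity}, which says each graded piece is semisimple, combined with a weight argument. I first reduce to $\mathcal{F},\mathcal{G}$ pure of weight zero, which is possible since semisimple complexes are sums of shifted twisted simple perverse sheaves defined over some $\mathbb{F}_{q^n}$. In that case each graded piece, after the normalized shift, is pure of the same weight, being built from $!$-pushforwards along proper maps and pullbacks along affine bundles with Tate twists. A filtration of pure complexes of equal weight on $E_{\alpha'}\times E_{\beta'}$ splits, because the relevant $\mathrm{Ext}^1$ groups have nonzero weight and so the extensions vanish after passing to $\overline{\mathbb{F}}_q$. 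Alternatively, and following \cite{FLX} more closely, I would use hyperbolic localization and Braden's theorem \cite{Braden03} with a $\mathbb{G}_m$-action that contracts onto the fixed locus $\bigsqcup_\lambda E_{\alpha_a}\times E_{\alpha_b}\times E_{\beta_a}\times E_{\beta_b}$, which gives the splitting directly. Either way, the delicate points are verifying purity, or the hypotheses of Braden's theorem, for the non-proper restriction leg, and checking that the shifts match exactly.
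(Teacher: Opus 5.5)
This theorem is not proved in the paper: it is quoted from \cite{FLX}, and the introduction credits the function-level identity to \cite{XXZ}. So there is no internal argument to compare with. Your main route is the classical stratification argument going back to Lusztig, with Braden's theorem supplying purity of $\Res$: base change to $Z=q_{\alpha,\beta}^{-1}(F_{\alpha',\beta'})$, stratify by $\underline{\dim}(W\cap W_0)$, identify each stratum with one summand, and split the resulting filtration by weights. With the repairs below, it proves the statement in the mixed setting.

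The bookkeeping you defer does close up. The map from $Z_\lambda$ to the fibre product computing the $\lambda$-summand has affine-space fibres, made of two free pieces:
\begin{itemize}
\item the choice of $W$ with prescribed $W\cap W_0$ and $W+W_0$, of dimension $\sum_i(\beta_a)_i(\alpha_b)_i$;
\item the block $V/(W+W_0)\to W\cap W_0$, of dimension $\sum_h(\alpha_a)_{s(h)}(\beta_b)_{t(h)}$.
\end{itemize}
The block $W/(W\cap W_0)\to W_0/(W\cap W_0)$ is forced to vanish. Subtracting twice these ranks from $\langle\langle\alpha,\beta\rangle\rangle-\langle\alpha',\beta'\rangle$ gives exactly $\langle\langle\alpha_a,\beta_a\rangle\rangle+\langle\langle\alpha_b,\beta_b\rangle\rangle-\langle\alpha_a,\alpha_b\rangle-\langle\beta_a,\beta_b\rangle-(\alpha_b,\beta_a)$, which is the stated shift. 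Another natural way to categorify \cite{XXZ} is purity of both sides plus equality of trace functions over all $\mathbb{F}_{q^n}$. Compared with that, your argument produces the summands geometrically and avoids Green's Hall-algebra theorem. Both routes need the same purity input for $\Res$.

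Three points need repair.

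\textbf{The reduction sentence is false over $\overline{\mathbb{F}}_q$.} Take the Kronecker quiver with dimension vector $(1,1)$, so $E=\{(a,b)\}$. The $G$-orbits in $\{ab\neq 0\}$ are parametrized by $b/a\in\mathbb{G}_m$, with connected stabilizers. The intermediate extension of the pullback of a tame rank-one local system of infinite order on $\mathbb{G}_m$ is then a $G$-equivariant simple perverse sheaf with no Weil structure over any $\mathbb{F}_{q^n}$, because Frobenius sends its character $\chi$ to $\chi^{q}$. So the weight argument only covers the mixed setting. That is the setting the paper actually uses: $D^b_m$, the Tate twists in $\{d\}$, and Lemma~\ref{lem: semisimplicity} resting on BBD and Braden. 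There, every geometric simple constituent is defined over some $\mathbb{F}_{q^n}$ and is pure after a twist, so phrase your reduction that way.

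\textbf{Purity of the graded pieces does not follow from \emph{proper pushforwards and affine pullbacks}.} On its own, $\kappa_!\iota^*$ only preserves weights $\le w$. You need that $\Res_{\alpha',\beta'}$ is the hyperbolic localization for the cocharacter acting by $t$ on $W_0$, so that Braden's theorem \cite{Braden03} gives purity. It is purity, not the bare semisimplicity in Lemma~\ref{lem: semisimplicity}, that kills the connecting morphisms. Each connecting morphism is defined over the finite field, so its image in the geometric $\mathrm{Hom}$ is a Frobenius-fixed vector in a space of weights $\ge 1$, hence zero.

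\textbf{Your alternative route is misdescribed.} For $\mathbb{G}_m$ acting on $E_{\alpha,\beta}$, the fixed locus is $\bigsqcup_\lambda E_{\alpha_a,\beta_a}\times E_{\alpha_b,\beta_b}$, not $\bigsqcup_\lambda E_{\alpha_a}\times E_{\alpha_b}\times E_{\beta_a}\times E_{\beta_b}$. Its attracting sets are exactly your $Z_\lambda$. The splitting in that route comes from hyperbolic localization commuting with the proper map $q_{\alpha,\beta}$. That is a genuine theorem, not a formality, because the $Z_\lambda$ are only locally closed in $Z$.
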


\begin{remark}
	The geometric Green's formula was originally established by Lusztig in \cite{Lus90, Lus91, Lus93} for the subcategory $\mathcal{Q}$ additively generated by the direct summands of Lusztig sheaves, and was later extended to all semisimple complexes in \cite{XXZ,FLX}. 
	Although a general semisimple complex may not be explicitly decomposed into a direct sum of external tensor products under the restriction functor, this extension remains fundamental.
\end{remark}

\section{Main Results: Iterated Geometric Green's Formulas}

In this section, we present the iterated generalization of the geometric Green's formula for semisimple complexes.

\subsection{Iterated Functors}
Fix an integer $n \geq 2$. For a dimension vector sequence $(\gamma_1, \dots, \gamma_n)$, we denote the \textbf{tail sums} by $\gamma_{\ge j} = \sum_{k=j}^n \gamma_k$. In particular, for $j=n$, the sum reduces to $\gamma_{\ge n} = \gamma_n$. We also denote the total sum by $\gamma \coloneqq \gamma_{\ge 1}$.

\begin{definition}[Iterated Induction and Restriction Functors]
	Fix an integer $n \geq 2$. Let $\underline{\gamma} = (\gamma_1, \dots, \gamma_n)$ be a sequence of dimension vectors with total dimension $\gamma \coloneqq \gamma_{\ge 1}$.
	
	\begin{enumerate}
		\item The \textbf{$(n-1)$-fold iterated induction functor} 
		\begin{equation}
			\bInd_{\gamma_1, \dots, \gamma_n}^{\gamma} : D^b_{m, \prod_{i=1}^{n}G_{\gamma_{i}}}\left(\prod_{i=1}^{n} E_{\gamma_i}, \overline{\mathbb{Q}}_\ell\right) \to D^b_{m, G_\gamma}(E_{\gamma},\overline{\mathbb{Q}}_\ell).
		\end{equation}
		is defined via the recursive composition:
		\begin{equation}
			\bInd_{\gamma_1, \dots, \gamma_n}^{\gamma} \coloneqq \Ind_{\gamma_1, \gamma_{\ge 2}}^{\gamma} \circ \left(\mathrm{Id}_{\gamma_1} \times \Ind_{\gamma_2, \gamma_{\ge 3}}^{\gamma_{\ge 2}}\right) \circ \dots \circ \left(\mathrm{Id}_{\gamma_1 \times \dots \times \gamma_{n-2}} \times \Ind_{\gamma_{n-1}, \gamma_n}^{\gamma_{\ge n-1}} \right).
		\end{equation}
		
		\item Correspondingly, the \textbf{$(n-1)$-fold iterated restriction functor}
			\begin{equation}
			\bRes_{\gamma_1, \dots, \gamma_n}^{\gamma} : D^b_{m, G_\gamma}(E_{\gamma},\overline{\mathbb{Q}}_\ell) \to D^b_{m, \prod_{i=1}^{n}G_{\gamma_{i}}}\left(\prod_{i=1}^{n} E_{\gamma_i}, \overline{\mathbb{Q}}_\ell\right).
		\end{equation}
		is defined via the recursive composition:
		\begin{equation}
			\bRes_{\gamma_1, \dots, \gamma_n}^{\gamma} \coloneqq \left(\mathrm{Id}_{\gamma_1 \times \dots \times \gamma_{n-2}} \times \Res_{\gamma_{n-1}, \gamma_{n}}^{\gamma_{\ge n-1}} \right) \circ \dots \circ \left(\mathrm{Id}_{\gamma_1} \times \Res_{\gamma_2, \gamma_{\ge 3}}^{\gamma_{\ge 2}}\right) \circ \Res_{ \gamma_1, \gamma_{\ge 2}}^{\gamma}.
		\end{equation}
	\end{enumerate}
	Due to the associativity of the induction functor and the coassociativity of the restriction functor, these functors are well-defined up to canonical isomorphisms, making them independent of the order of parenthesization.
\end{definition}

\subsection{The formula for $\bRes \circ \Ind$}
The first result characterizes the composition of an $(n-1)$-fold iterated restriction functor with a binary induction functor. To state this main identity clearly, we first introduce the necessary index sets and operators.

For a given pair of dimension vectors $\alpha$ and $\beta$ with $\gamma = \alpha + \beta$, and an integer $n \ge 2$, we define the index set $\mathcal{M}_{n-1}$ as the collection of sequences of quadruples of dimension vectors:
\begin{equation}
	\lambda = (\lambda_1, \lambda_2, \dots, \lambda_{n-1}) \in \prod_{j=1}^{n-1} (\mathbb{N}^{Q_0})^4.
\end{equation}
Each component $\lambda_j = ( (\alpha_j)_a, (\alpha_j)_b, (\beta_j)_a, (\beta_j)_b )$ consists of the intermediate dimension vectors at the $j$-th step, which satisfy the following system of recursive constraints for $1 \le j \le n-1$:
\begin{equation}
	\begin{aligned}
		\gamma_j &= (\alpha_j)_a + (\beta_j)_a, & \gamma_{\ge j+1} &= (\alpha_j)_b + (\beta_j)_b, \\
		\alpha_j &= (\alpha_j)_a + (\alpha_j)_b, & \beta_j &= (\beta_j)_a + (\beta_j)_b,
	\end{aligned}
\end{equation}
with the initial conditions $\alpha_1 = \alpha$, $\beta_1 = \beta$, and $\alpha_j = (\alpha_{j-1})_b$, $\beta_j = (\beta_{j-1})_b$ for $2 \le j \le n-1$.

Associated with each sequence $\lambda \in \mathcal{M}_{n-1}$, we define the total shift as:
\begin{equation}
	\Xi = \sum_{j=1}^{n-1} \left( (\alpha_j)_b, (\beta_j)_a \right).
\end{equation}

Furthermore, we define a sequence of recursive operators $\Psi_j$ for $1 \le j \le n-2$ by:
\begin{equation}
	\begin{aligned}
		\Psi_j \coloneqq & \left( \mathrm{Id}_{\gamma_1, \dots, \gamma_{j-1}} \times \Ind_{(\alpha_j)_a, (\beta_j)_a}^{\gamma_j} \times \mathrm{Id}_{(\alpha_j)_b, (\beta_j)_b} \right) \circ \left( \mathrm{Id}_{\gamma_1, \dots, \gamma_{j-1}} \times \tau_{\lambda_j!} \right) \\
		& \circ \left( \mathrm{Id}_{\gamma_1, \dots, \gamma_{j-1}} \times \Res_{(\alpha_j)_a, (\alpha_j)_b}^{\alpha_j} \times \Res_{(\beta_j)_a, (\beta_j)_b}^{\beta_j} \right),
	\end{aligned}
\end{equation}
and the terminal operator $\Psi_{\text{final}}$ by:
\begin{equation}
	\begin{aligned}
		\Psi_{\text{final}} \coloneqq & \left( \mathrm{Id}_{\gamma_1, \dots, \gamma_{n-2}} \times \Ind_{(\alpha_{n-1})_a, (\beta_{n-1})_a}^{\gamma_{n-1}} \times \Ind_{(\alpha_{n-1})_b, (\beta_{n-1})_b}^{\gamma_n} \right) \circ \left( \mathrm{Id}_{\gamma_1, \dots, \gamma_{n-2}} \times \tau_{\lambda_{n-1}!} \right) \\
		& \circ \left( \mathrm{Id}_{\gamma_1, \dots, \gamma_{n-2}} \times \Res_{(\alpha_{n-1})_a, (\alpha_{n-1})_b}^{\alpha_{n-1}} \times \Res_{(\beta_{n-1})_a, (\beta_{n-1})_b}^{\beta_{n-1}} \right).
	\end{aligned}
\end{equation}

With these notations established, the main identity can be stated as follows.

\begin{theorem} \label{thm:main1}
	Let $\mathcal{F} \in \mathcal{D}^b_{G_{\alpha}}(E_{\alpha}, \Ql)$ and $\mathcal{G} \in \mathcal{D}^b_{G_{\beta}}(E_{\beta}, \Ql)$ be semisimple complexes, and let $\gamma = \alpha + \beta$. The composition of the iterated restriction and the induction functor satisfies the following isomorphism:
	\begin{equation}
		\bRes_{\gamma_1, \dots, \gamma_n}^{\gamma} \circ \Ind_{\alpha, \beta}^{\gamma}(\mathcal{F} \boxtimes \mathcal{G}) 
		\simeq \bigoplus_{\lambda \in \mathcal{M}_{n-1}} \left( \Psi_{\text{final}} \circ \Psi_{n-2} \circ \dots \circ \Psi_1 \right) (\mathcal{F} \boxtimes \mathcal{G}) \{ -\Xi \}.
	\end{equation}
\end{theorem}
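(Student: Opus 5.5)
We argue by induction on $n$, applying the geometric Green's formula (Theorem~\ref{thm: Lusztig-Green}) once per restriction step. For $n=2$ the index set $\mathcal{M}_1$ is exactly the set $\mathcal{N}$ of quadruples with $\alpha'=\gamma_1$, $\beta'=\gamma_2$. The operator $\Psi_{\text{final}}$ is the composite $(\Ind\times\Ind)\circ\tau_{\lambda_1!}\circ(\Res\boxtimes\Res)$, and $\Xi=((\alpha_1)_b,(\beta_1)_a)$. So the statement for $n=2$ is literally Theorem~\ref{thm: Lusztig-Green}.

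For the inductive step, we write the iterated restriction as
\[
\bRes_{\gamma_1,\dots,\gamma_n}^{\gamma}\simeq\left(\mathrm{Id}_{\gamma_1}\times\bRes^{\gamma_{\ge 2}}_{\gamma_2,\dots,\gamma_n}\right)\circ\Res^{\gamma}_{\gamma_1,\gamma_{\ge 2}},
\]
using the recursive definition and coassociativity. First apply Theorem~\ref{thm: Lusztig-Green} to $\Res_{\gamma_1,\gamma_{\ge 2}}\Ind_{\alpha,\beta}(\mathcal{F}\boxtimes\mathcal{G})$. This gives a direct sum over $\lambda_1$ of
\[
\Ind^{\gamma_1}_{(\alpha_1)_a,(\beta_1)_a}\times\Ind^{\gamma_{\ge2}}_{(\alpha_1)_b,(\beta_1)_b}
\]
applied to $\tau_{\lambda_1!}(\Res\mathcal{F}\boxtimes\Res\mathcal{G})$, with shift $\{-((\alpha_1)_b,(\beta_1)_a)\}$. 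The functor $\mathrm{Id}_{\gamma_1}\times\bRes_{\gamma_2,\dots,\gamma_n}$ commutes with the direct sum and the shift. It acts only on the second factor, so it commutes with $\Ind^{\gamma_1}_{(\alpha_1)_a,(\beta_1)_a}\times\mathrm{Id}$. We therefore have to compute
\[
\bRes^{\gamma_{\ge2}}_{\gamma_2,\dots,\gamma_n}\circ\Ind^{\gamma_{\ge 2}}_{(\alpha_1)_b,(\beta_1)_b}
\]
applied to the complex $\tau_{\lambda_1!}(\Res\mathcal{F}\boxtimes\Res\mathcal{G})$ on $E_{(\alpha_1)_a}\times E_{(\beta_1)_a}\times E_{(\alpha_1)_b}\times E_{(\beta_1)_b}$.

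The main obstacle is that this complex is not an external product $\mathcal{F}'\boxtimes\mathcal{G}'$ over the last two factors. So the induction hypothesis, as stated for $\mathcal{F}\boxtimes\mathcal{G}$, does not apply directly. We handle this by strengthening the statement, in both Theorem~\ref{thm: Lusztig-Green} and the induction hypothesis, to a parametrized version. In it, $\mathcal{F}\boxtimes\mathcal{G}$ is replaced by an arbitrary semisimple complex $\mathcal{H}$ on $X\times E_\alpha\times E_\beta$ (equivariant for $G_X\times G_\alpha\times G_\beta$), and the functors act as the identity on the $X$-factor. The proof in \cite{FLX} goes through verbatim for this version. It is a statement about base change along the induction and restriction diagrams, plus a stratification of $F_{\alpha',\beta'}\cap q^{-1}(\ldots)$ by the dimension vectors of $W\cap W_0$ and the purity/semisimplicity argument that splits the resulting triangles. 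None of this uses the product structure, and adding an inert factor $X$ changes no stratum. Semisimplicity of the intermediate complex $\tau_{\lambda_1!}(\Res\mathcal{F}\boxtimes\Res\mathcal{G})$ follows from Lemma~\ref{lem: semisimplicity}. Alternatively, for split complexes one can note that the relevant Green isomorphism is natural in $\mathcal{F}\boxtimes\mathcal{G}$; we would then reduce to the case of an external product by decomposing the semisimple complex into simple perverse summands on the product of equivariant spaces. Each such summand is an external product of simple equivariant perverse sheaves, because the groups act factorwise and the IC of a product is a product.

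With the parametrized version in hand, we take $X=E_{\gamma_1}$ after applying $\Ind^{\gamma_1}$ (these commute), and apply the induction hypothesis with $n-1$ in place of $n$ and $(\alpha_2,\beta_2)=((\alpha_1)_b,(\beta_1)_b)$. This produces a sum over $(\lambda_2,\dots,\lambda_{n-1})$, which together with $\lambda_1$ ranges exactly over $\mathcal{M}_{n-1}$ by the recursive constraints. The resulting composite is $\Psi_{\text{final}}\circ\cdots\circ\Psi_2$ with an extra $\mathrm{Id}_{\gamma_1}$ prefix. Moving $\Ind^{\gamma_1}_{(\alpha_1)_a,(\beta_1)_a}$ back before these operators, which commute since they act on disjoint factors, gives exactly $\Psi_1$ as its first step. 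The shifts add to $\Xi=\sum_j((\alpha_j)_b,(\beta_j)_a)$. The remaining checks are routine: the identifications of the $\tau_{\lambda_j}$ swaps with the indexing of the factors, and the fact that Tate twists and shifts commute with all functors.
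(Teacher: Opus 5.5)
Your proof is correct, but it runs the induction from the opposite end from the paper.

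\textbf{The paper's route.} It writes $\bRes_{\gamma_1,\dots,\gamma_n}=(\mathrm{Id}\times\Res_{\gamma_{n-1},\gamma_n})\circ\bRes_{\gamma_1,\dots,\gamma_{n-2},\gamma_{\ge n-1}}$ and applies the induction hypothesis to the head. It then slides $\Res_{\gamma_{n-1},\gamma_n}$ past $\Ind^{\gamma_{n-2}}_{(\alpha_{n-2})_a,(\beta_{n-2})_a}$ and uses the binary Green formula once more on $\Res_{\gamma_{n-1},\gamma_n}\circ\Ind^{\gamma_{\ge n-1}}_{(\alpha_{n-2})_b,(\beta_{n-2})_b}$. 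Finally it has to reassemble the old terminal operator into the non-terminal $\Psi_{n-2}$ followed by the new $\Psi_{\text{final}}$.

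\textbf{Your route.} You peel off the first restriction, so one application of Green produces $\Psi_1(\mathcal{F}\boxtimes\mathcal{G})$. The $(n-1)$ case for $(\gamma_2,\dots,\gamma_n)$ with $(\alpha_2,\beta_2)=((\alpha_1)_b,(\beta_1)_b)$ then returns $\Psi_{\text{final}}\circ\cdots\circ\Psi_2$ directly. Here $\mathcal{M}_{n-1}$ and $\Xi$ simply split off $\lambda_1$ and its shift, so the bookkeeping is a little cleaner than the paper's reassembly.

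\textbf{The more substantive difference.} You name an issue the paper passes over. In either direction, the complex fed into the inner induction comes from $\tau_{\lambda_j!}\circ(\Res\times\Res)$. It is not an external product over the two factors being induced, and it sits next to inert factors. So neither the binary Green formula as stated nor the induction hypothesis applies literally. The paper tacitly treats Green's formula as a ``local'' functor identity. Your parametrized statement (an arbitrary semisimple $\mathcal{H}$ on $X\times E_\alpha\times E_\beta$, with the functors acting as the identity on $X$), carried through the induction, is exactly what justifies that step. It rests on the same appeal to the Fang--Lan--Xiao argument that the paper makes implicitly.

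\textbf{Drop your alternative reduction.} It is not true in general that a simple $G_\alpha\times G_\beta$-equivariant perverse sheaf on $E_\alpha\times E_\beta$ is an external product once orbits come in families. Take the Jordan quiver with $\alpha=\beta=1$. Then $\mathbb{G}_m\times\mathbb{G}_m$ acts trivially on $E_1\times E_1=\mathbb{A}^2$, and the shifted constant sheaf on the diagonal is a simple equivariant perverse sheaf whose support is not a product. The Kronecker quiver gives similar examples, for instance the intersection complex of the quadric $\{v\wedge w=0\}\subset\mathbb{A}^2\times\mathbb{A}^2$. That reduction is valid only with finitely many orbits. Naturality of the Green isomorphism is also unclear, since the isomorphism comes from splitting triangles, and it would not be needed anyway: additivity of both sides would suffice if the decomposition held.
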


\subsection{The formula for $\Res \circ \bInd$}

Analogously, we establish the decomposition for the composition of a binary restriction functor with an $(n-1)$-fold iterated induction product. To formulate this result clearly, we first introduce the corresponding index sets and operators.

For a sequence of dimension vectors $\gamma_1, \dots, \gamma_n$ and a pair of dimension vectors $\alpha, \beta$ such that $\gamma = \sum_{i=1}^n \gamma_i = \alpha + \beta$, we define the index set $\mathcal{N}_{n-1}$ as the collection of sequences of quadruples:
\begin{equation}
	\lambda = (\lambda_1, \lambda_2, \dots, \lambda_{n-1}) \in \prod_{j=1}^{n-1} (\mathbb{N}^{Q_0})^4.
\end{equation}
Each component $\lambda_j = \left( (\gamma_j)_a, (\gamma_j)_b, (\gamma_{\ge j+1})_a, (\gamma_{\ge j+1})_b \right)$ consists of the intermediate dimension vectors at the $j$-th step, which satisfy the following system of recursive constraints for $1 \le j \le n-1$:
\begin{equation}
	\begin{aligned}
		(\gamma_{\ge j})_a &= (\gamma_j)_a + (\gamma_{\ge j+1})_a, & (\gamma_{\ge j})_b &= (\gamma_j)_b + (\gamma_{\ge j+1})_b, \\
		\gamma_j &= (\gamma_j)_a + (\gamma_j)_b, & \gamma_{\ge j+1} &= (\gamma_{\ge j+1})_a + (\gamma_{\ge j+1})_b,
	\end{aligned}
\end{equation}
subject to the conditions $(\gamma_{\ge 1})_a = \alpha$ and $(\gamma_{\ge 1})_b = \beta$.

Associated with each sequence $\lambda \in \mathcal{N}_{n-1}$, the total shift is given by:
\begin{equation}
	\Omega = \sum_{j=1}^{n-1} \left( (\gamma_{n-j})_b, (\gamma_{\ge n-j+1})_a \right).
\end{equation}

Furthermore, we define a sequence of recursive operators $\Theta_j$ for $2 \le j \le n-1$ by:
\begin{equation}
	\begin{aligned}
		\Theta_j \coloneqq & \left(\mathrm{Id}_{\gamma_1,\dots, \gamma_{n-1-j}} \times \Ind_{(\gamma_{n-j})_a, (\gamma_{\ge n-j+1})_a}^{(\gamma_{\ge n-j})_a} \times \Ind_{(\gamma_{n-j})_b, (\gamma_{\ge n-j+1})_b}^{(\gamma_{\ge n-j})_b} \right) \circ \left( \mathrm{Id}_{\gamma_1,\dots, \gamma_{n-1-j}} \times \tau_{\lambda_{n-j}!} \right) \\
		& \circ \left( \mathrm{Id}_{\gamma_1,\dots, \gamma_{n-1-j}} \times \Res_{(\gamma_{n-j})_a, (\gamma_{n-j})_b}^{\gamma_{n-j}} \times \mathrm{Id}_{(\gamma_{\ge n-j+1})_a, (\gamma_{\ge n-j+1})_b} \right),
	\end{aligned}
\end{equation}
and the initial operator $\Theta_{\text{init}}$ by:
\begin{equation}
	\begin{aligned}
		\Theta_{\text{init}} \coloneqq & \left(\mathrm{Id}_{\gamma_1,\dots, \gamma_{n-2}}\times \Ind_{(\gamma_{n-1})_a, (\gamma_n)_a}^{(\gamma_{\ge n-1})_a} \times \Ind_{(\gamma_{n-1})_b, (\gamma_n)_b}^{(\gamma_{\ge n-1})_b} \right) \circ \left(\mathrm{Id}_{\gamma_1,\dots, \gamma_{n-2}} \times \tau_{\lambda_{n-1}!} \right) \\
		& \circ \left( \mathrm{Id}_{\gamma_1,\dots, \gamma_{n-2}}\times \Res_{(\gamma_{n-1})_a, (\gamma_{n-1})_b}^{\gamma_{n-1}} \times \Res_{(\gamma_{n})_a, (\gamma_{n})_b}^{\gamma_{n}} \right).
	\end{aligned}
\end{equation}

With these preparations, the main identity for the restriction of the iterated induction product can be stated as follows.

\begin{theorem} \label{thm:main2}
	Let $\mathcal{L}_i \in \mathcal{D}^b_{G_{\gamma_i}}(E_{\gamma_i}, \Ql)$ for $1 \le i \le n$ be semisimple complexes. Let $\gamma = \sum_{i=1}^n \gamma_i = \alpha + \beta$. The restriction of the iterated induction product satisfies the following isomorphism:
	\begin{equation}
		\Res_{\alpha,\beta}^{\gamma} \circ \bInd_{\gamma_1, \dots, \gamma_n}^{\gamma} (\boxtimes_{i=1}^n \mathcal{L}_i) 
		\simeq \bigoplus_{\lambda\in {\mathcal{N}_{n-1}}} \left( \Theta_{n-1} \circ \Theta_{n-2} \circ \dots \circ \Theta_{\text{init}} \right) (\boxtimes_{i=1}^n \mathcal{L}_i) \{-\Omega\}.
	\end{equation}
\end{theorem}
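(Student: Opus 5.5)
We prove Theorem~\ref{thm:main2} by induction on $n$, peeling off the outermost induction $\Ind_{\gamma_1,\gamma_{\ge 2}}^{\gamma}$ and applying the binary geometric Green's formula (Theorem~\ref{thm: Lusztig-Green}) at each step. For $n=2$, the iterated induction is just $\Ind_{\gamma_1,\gamma_2}^{\gamma}$. The index set $\mathcal{N}_1$ is exactly the set $\mathcal{N}$ of Theorem~\ref{thm: Lusztig-Green}, with $(\alpha_a,\alpha_b,\beta_a,\beta_b)=((\gamma_1)_a,(\gamma_1)_b,(\gamma_2)_a,(\gamma_2)_b)$ and $(\alpha',\beta')=(\alpha,\beta)$. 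The operator $\Theta_{\text{init}}$ is literally the right-hand side of that theorem, and the shift $\Omega=((\gamma_1)_b,(\gamma_2)_a)$ matches. So the base case is immediate.

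For the inductive step, write
\[
\bInd_{\gamma_1,\dots,\gamma_n}^{\gamma}=\Ind_{\gamma_1,\gamma_{\ge 2}}^{\gamma}\circ\bigl(\mathrm{Id}_{\gamma_1}\times\bInd_{\gamma_2,\dots,\gamma_n}^{\gamma_{\ge 2}}\bigr).
\]
By Lemma~\ref{lem: semisimplicity}, $\mathcal{G}:=\bInd_{\gamma_2,\dots,\gamma_n}^{\gamma_{\ge2}}(\boxtimes_{i\ge2}\mathcal{L}_i)$ is semisimple. Applying Theorem~\ref{thm: Lusztig-Green} to $\mathcal{L}_1\boxtimes\mathcal{G}$ gives
\[
\bigoplus_{\lambda_1}\bigl(\Ind_{(\gamma_1)_a,(\gamma_{\ge2})_a}\times\Ind_{(\gamma_1)_b,(\gamma_{\ge2})_b}\bigr)\circ\tau_{\lambda_1!}\circ\bigl(\Res_{(\gamma_1)_a,(\gamma_1)_b}\mathcal{L}_1\boxtimes\Res_{(\gamma_{\ge2})_a,(\gamma_{\ge2})_b}\mathcal{G}\bigr)\{-((\gamma_1)_b,(\gamma_{\ge2})_a)\}.
\]
We then apply the induction hypothesis to $\Res_{(\gamma_{\ge2})_a,(\gamma_{\ge2})_b}\mathcal{G}$, i.e.\ Theorem~\ref{thm:main2} for the $n-1$ vectors $\gamma_2,\dots,\gamma_n$ with $(\alpha,\beta)$ replaced by $((\gamma_{\ge2})_a,(\gamma_{\ge2})_b)$. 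This yields a sum over $(\lambda_2,\dots,\lambda_{n-1})$ of the composite $\Theta$-operators (with indices shifted by one) and shifts $\{-\Omega'\}$. Combined with the choice of $\lambda_1$, these sequences range exactly over $\mathcal{N}_{n-1}$, since the constraints are recursive in $j$. The total shift is $\Omega'+((\gamma_1)_b,(\gamma_{\ge2})_a)$, which is $\Omega$ because the $j=n-1$ term of $\Omega$ is $((\gamma_1)_b,(\gamma_{\ge2})_a)$.

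The main bookkeeping is to check that the outer layer reproduces $\Theta_{n-1}$. We need to commute $\Res_{(\gamma_1)_a,(\gamma_1)_b}$ on the first factor past the inner operators, which act only on the remaining factors. This uses the fact that functors on disjoint tensor factors of $\boxtimes$ commute (Künneth/base change for products), together with additivity of all functors, so the sum can be pulled outside. The result is exactly $\Theta_{n-1}\circ\cdots\circ\Theta_{\text{init}}$: restrict $\mathcal{L}_1$, then swap with $\tau_{\lambda_1!}$, then induce in the $a$ and $b$ parts.

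The hardest step is the correct placement of the identity factors and of the permutation $\tau$. The inner $\Theta$'s produce objects on $E_{(\gamma_{\ge2})_a}\times E_{(\gamma_{\ge2})_b}$ only after all inner inductions are performed. So before applying the outer $\tau_{\lambda_1!}$ and outer inductions, we must verify that the inner formula lands on a product with $E_{(\gamma_1)_a}\times E_{(\gamma_1)_b}$ kept in front. The paper's indexing of $\Theta_j$ treats the first $n-1-j$ factors as untouched $\mathcal{L}_i$, and we need to match this convention. I would handle it by explicitly tracking the ordering of factors at each stage, using that $\tau$ commutes with box products in the obvious way.

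Theorem~\ref{thm:main1} is proved in the same way, by induction peeling off $\Res_{\gamma_1,\gamma_{\ge2}}$ first. There we apply Theorem~\ref{thm: Lusztig-Green} to $\Res_{\gamma_1,\gamma_{\ge2}}\Ind_{\alpha,\beta}$, then apply the hypothesis to the second factor $\Ind_{(\alpha_1)_b,(\beta_1)_b}(\dots)$. This requires first commuting the remaining iterated restriction past $\Ind_{(\alpha_1)_a,(\beta_1)_a}$, which sits on the first factor.
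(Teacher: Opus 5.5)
Your proof is correct, but it peels the iterated induction from the opposite end to the one the paper uses.

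The paper omits this proof as "analogous" to that of Theorem~\ref{thm:main1}. The analogue of that argument would coarsen $(\gamma_{n-1},\gamma_n)$ into $\gamma_{\ge n-1}$ and apply the hypothesis to $\boxtimes_{i\le n-2}\mathcal{L}_i\boxtimes\Ind_{\gamma_{n-1},\gamma_n}^{\gamma_{\ge n-1}}(\mathcal{L}_{n-1}\boxtimes\mathcal{L}_n)$. It would then expand $\Res_{(\gamma_{\ge n-1})_a,(\gamma_{\ge n-1})_b}^{\gamma_{\ge n-1}}\circ\Ind_{\gamma_{n-1},\gamma_n}^{\gamma_{\ge n-1}}$ inside the coarsened chain's initial operator. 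The Green output becomes the new $\Theta_{\text{init}}$, the rest of that initial operator becomes $\Theta_2$, and the new shift is the $j=1$ term of $\Omega$.

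You instead apply Green's formula once at the top level and the hypothesis to $\Res_{(\gamma_{\ge2})_a,(\gamma_{\ge2})_b}^{\gamma_{\ge2}}\bInd_{\gamma_2,\dots,\gamma_n}^{\gamma_{\ge2}}$. Your bookkeeping is right. With primes referring to $(\gamma_2,\dots,\gamma_n)$, one has $\Theta_j=\mathrm{Id}_{\gamma_1}\times\Theta'_j$ with the same index (including $\Theta_{\text{init}}$) and $\lambda'_j=\lambda_{j+1}$. The new shift is the $j=n-1$ term of $\Omega$. So the placement of identities and of $\tau$ that you flag as the hardest step is automatic.

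What your ordering buys is that the hypothesis is applied to a complex literally of the theorem's form. Reassembly then needs only K\"unneth compatibility $(F\times G)(A\boxtimes B)\simeq F(A)\boxtimes G(B)$ and additivity. The paper-style ordering has to split the coarsened initial operator into its two tensor slots in the middle of the chain. In exchange, it mirrors the written proof of Theorem~\ref{thm:main1}. In either route for this theorem, Green's formula is only ever applied to an external product of two semisimple complexes, exactly as the binary formula is stated.

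Your closing remark on Theorem~\ref{thm:main1} is less innocent. Peeling $\Res_{\gamma_1,\gamma_{\ge2}}$ first makes the hypothesis act on the $b$-part of $\tau_{\lambda_1!}(\Res\,\mathcal{F}\boxtimes\Res\,\mathcal{G})$, which is not of the form $\mathcal{F}'\boxtimes\mathcal{G}'$. There you would need Green's formula for semisimple complexes on the product that are not external products. The paper's own proof of Theorem~\ref{thm:main1} also uses this extension tacitly.
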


\section{Proof of the Iterated Formulas}

In this section, we provide the detailed inductive proof for Theorem \ref{thm:main1}. The proof for the dual identity (Theorem \ref{thm:main2}) follows an analogous logic and is thus omitted.

\begin{proof}[Proof of Theorem \ref{thm:main1}]
	We proceed by induction on $n \geq 2$.
	
	\medskip
	\noindent\textbf{1. Base Case ($n=2$):} 
	When $n=2$, the $(n-1)$-fold iterated restriction $\bRes_{\gamma_1, \gamma_2}^{\gamma}$ coincides with the binary restriction $\Res_{\gamma_1, \gamma_2}^{\gamma}$. According to the geometric Green's formula for semisimple complexes \cite{FLX}, we have the following isomorphism:
	\begin{equation}
		\Res_{\gamma_1, \gamma_2}^{\gamma} \circ \Ind_{\alpha, \beta}^{\gamma}(\mathcal{F} \boxtimes \mathcal{G}) 
		\simeq \bigoplus_{\lambda_1 \in \mathcal{M}_1} \Psi_{\text{final}} (\mathcal{F} \boxtimes \mathcal{G}) \{ -(\alpha_b, \beta_a) \}.
	\end{equation}
	By setting $n=2$ in the general formula, we observe that the operator chain consists of a single term $\Psi_{\text{final}}$, and the shift $\Xi$ reduces to the single symmetrized Euler form $( (\alpha_1)_b, (\beta_1)_a )$. Thus, the base case holds.
	
	\medskip
	\noindent\textbf{2. Inductive Hypothesis:} 
	Assume that the theorem holds for the case $n-1$. That is, for a sequence of dimension vectors $(\gamma_1, \dots, \gamma_{n-2}, \gamma_{\ge n-1})$ where $\gamma_{\ge n-1} = \gamma_{n-1} + \gamma_n$, the composition of the $(n-2)$-fold restriction with the induction functor yields:
	\begin{equation}
		\bRes_{\gamma_1, \dots, \gamma_{n-2}, \gamma_{\ge n-1}}^{\gamma} \circ \Ind_{\alpha, \beta}^{\gamma}(\mathcal{F} \boxtimes \mathcal{G}) 
		\simeq \bigoplus_{\lambda' \in \mathcal{M}_{n-2}} \left( \Psi'_{n-2} \circ \dots \circ \Psi_1 \right) (\mathcal{F} \boxtimes \mathcal{G}) \{ -\Xi_{n-2} \},
	\end{equation}
	where $\Psi'_{n-2}$ is the terminal operator for the sequence of length $n-1$, and $\Xi_{n-2} = \sum_{j=1}^{n-2} ( (\alpha_j)_b, (\beta_j)_a )$.
	
	\medskip
	\noindent\textbf{3. Inductive Step:} 
	Consider the $(n-1)$-fold iterated restriction $\bRes_{\gamma_1, \dots, \gamma_n}^{\gamma}$. That is, the case $n$. By the recursive definition of the iterated restriction functor, we can decompose the operator as:
	\begin{equation} \label{eq:proof-decomp}
		\bRes_{\gamma_1, \dots, \gamma_n}^{\gamma} = \left( \prod_{i=1}^{n-2} \mathrm{Id}_{\gamma_i} \times \Res_{\gamma_{n-1}, \gamma_n}^{\gamma_{\ge n-1}} \right) \circ \bRes_{\gamma_1, \dots, \gamma_{n-2}, \gamma_{\ge n-1}}^{\gamma}.
	\end{equation}
	Substituting the inductive hypothesis into \eqref{eq:proof-decomp}, we consider the action of the final restriction step on the output of the preceding $n-2$ operators. The operator $\Res_{\gamma_{n-1}, \gamma_n}^{\gamma_{\ge n-1}}$ slides past the first $n-2$ factors and interacts exclusively with the terminal induction product in $\Psi'_{n-2}$:
	\begin{align*}
		& \left( \prod_{i=1}^{n-2} \mathrm{Id}_{\gamma_i} \times \Res_{\gamma_{n-1}, \gamma_n}^{\gamma_{\ge n-1}} \right) \circ \left( \prod_{i=1}^{n-3} \mathrm{Id}_{\gamma_i} \times \Ind_{(\alpha_{n-2})_a, (\beta_{n-2})_a}^{\gamma_{n-2}} \times \Ind_{(\alpha_{n-2})_b, (\beta_{n-2})_b}^{\gamma_{\ge n-1}} \right) (\cdots) \\
		\simeq & \left( \prod_{i=1}^{n-3} \mathrm{Id}_{\gamma_i} \times \Ind_{(\alpha_{n-2})_a, (\beta_{n-2})_a}^{\gamma_{n-2}} \times \left( \Res_{\gamma_{n-1}, \gamma_n}^{\gamma_{\ge n-1}} \circ \Ind_{(\alpha_{n-2})_b, (\beta_{n-2})_b}^{\gamma_{\ge n-1}} \right) \right) (\cdots).
	\end{align*}
	We now apply the binary geometric Green's formula to the local composition $(\Res_{\gamma_{n-1}, \gamma_n}^{\gamma_{\ge n-1}} \circ \Ind_{(\alpha_{n-2})_b, (\beta_{n-2})_b}^{\gamma_{\ge n-1}})$. This yields a direct sum over $\lambda_{n-1}$ of the form:
	\begin{align*}
		& \left( \Ind_{(\alpha_{n-1})_a, (\beta_{n-1})_a}^{\gamma_{n-1}} \times \Ind_{(\alpha_{n-1})_b, (\beta_{n-1})_b}^{\gamma_n} \right) \circ \tau_{\lambda_{n-1}!} \\
		& \circ \left( \Res_{(\alpha_{n-1})_a, (\alpha_{n-1})_b}^{(\alpha_{n-2})_b} \times \Res_{(\beta_{n-1})_a, (\beta_{n-1})_b}^{(\beta_{n-2})_b} \right) \{ - ( (\alpha_{n-1})_b, (\beta_{n-1})_a ) \}.
	\end{align*}
	The terminal interaction is computed as:
	\begin{equation}
		\begin{aligned}
			&	\mathrm{Id}_{\gamma_1, \dots, \gamma_{n-3}} \times \mathrm{Ind}_{(\alpha_{n-2})_a, (\beta_{n-2})_a}^{\gamma_{n-2}} \times (\mathrm{Res}^{\gamma_{\ge n-1}}_{\gamma_{n-1}, \gamma_n} \circ \mathrm{Ind}_{(\alpha_{ n-2})_b, (\beta_{n-2})_b}^{\gamma_{\ge n-1}}) \\
			\simeq & \bigoplus_{\lambda_{n-1}} \mathrm{Id}_{\gamma_1, \dots, \gamma_{n-3}} \times \mathrm{Ind}_{(\alpha_{n-2})_a, (\beta_{n-2})_a}^{\gamma_{n-2}} \\ & \times
			\left[ \left( \mathrm{Ind}_{(\alpha_{n-1})_a, (\beta_{n-1})_a}^{\gamma_{n-1}}
			\times \mathrm{Ind}_{(\alpha_{n-1})_b, (\beta_{n-1})_b}^{\gamma_n} \right) \circ \tau_{\lambda_{n-1}!} 
			\circ \left( \mathrm{Res}_{(\alpha_{n-1})_a, (\alpha_{n-1})_b}^{(\alpha_{n-2})_b} \times \mathrm{Res}_{(\beta_{n-1})_a, (\beta_{n-1})_b}^{(\beta_{n-2})_b} \right)\right] \\
			\simeq & \bigoplus_{\lambda_{n-1}} \left(\mathrm{Id} \times \mathrm{Ind}_{(\alpha_{n-1})_a, (\beta_{n-1})_a}^{\gamma_{n-1}}
			\times \mathrm{Ind}_{(\alpha_{n-1})_b, (\beta_{n-1})_b}^{\gamma_n}\right) \circ \left(\mathrm{Id} \times \tau_{\lambda_{n-1}!}  \right) \\
			& \circ \left( \mathrm{Id} \times \mathrm{Res}_{(\alpha_{n-1})_a, (\alpha_{n-1})_b}^{(\alpha_{n-2})_b} \times \mathrm{Res}_{(\beta_{n-1})_a, (\beta_{n-1})_b}^{(\beta_{n-2})_b}\right) \circ \left( \mathrm{Id} \times  \mathrm{Ind}_{(\alpha_{n-2})_a, (\beta_{n-2})_a}^{\gamma_{n-2}} \times \mathrm{Id} \right) \\
			\simeq & \bigoplus_{\lambda_{n-1}} \Psi_{\text{final}} \circ \left( \mathrm{Id} \times  \mathrm{Ind}_{(\alpha_{n-2})_a, (\beta_{n-2})_a}^{\gamma_{n-2}} \times \mathrm{Id}
			\right).
		\end{aligned}
	\end{equation}
	By inserting this back into the inductive expression and re-indexing the summation over $\mathcal{M}_{n-1} = \mathcal{M}_{n-2} \times \{ \lambda_{n-1} \}$, we obtain the chain:
	\begin{equation}
		\Psi_{\text{final}} \circ \Psi_{n-2} \circ \dots \circ \Psi_1,
	\end{equation}
	where $\Psi_{n-2}$ is now a recursive (non-terminal) operator as defined in Theorem \ref{thm:main1}. 
	
	\medskip
	\noindent\textbf{4. The total shift:} 
	The total shift $\Xi$ is the sum of the total shift from the first $n-2$ steps and the local shift produced in the final step:
	\begin{equation}
		\Xi = \Xi_{n-2} + \left( (\alpha_{n-1})_b, (\beta_{n-1})_a \right) = \sum_{j=1}^{n-1} \left( (\alpha_j)_b, (\beta_j)_a \right).
	\end{equation}
	By Lemma \ref{lem: semisimplicity}, all intermediate complexes remain semisimple, ensuring that these local isomorphisms can be inductively composed. This completes the proof of Theorem \ref{thm:main1}.
\end{proof}

\begin{remark} \label{rem:complexity}
	Note that in the subcategory of semisimple complexes, the induction and restriction functors do not admit explicit decompositions. As a result, each recursive operator $\Psi_j$ must retain the intertwined structure of $\Ind \circ \tau \circ \Res$. The final formula thus appears as a nested alternating chain, which simply reflects the inherent complexity of the iteration in this setting.
\end{remark}

\end{document}